                \def\g{\gamma}				 
  \def\O{{\mathcal O}} 
   \def\X{{\mathcal X}}
 \def\Z{{\mathcal Z}}
		\def\ZZ{\mathbb{Z}}
		\def\xto#1{\xrightarrow{#1}}
\newcommand{\ol}{\overline}
\newcommand{\mbf}{\mathbf}
		\newcommand{\Der}{Der}
		\newcommand{\ot}{\otimes}
		\newcommand{\om}{\omega}
		\newcommand{\Om}{\Omega}
		\def\id{\mathrm{Id}}
		\def\lim{\mathrm{lim}}
		\def\Hom{\mathrm{Hom}}
		\def\Ext{\mathrm{Ext}}
		\def\Cacti{\mathcal{C}acti}
		\newcommand{\lie}{\mathfrak}
		\def\!{\text{!`}}
		\newcommand{\To}{\Rightarrow}
		\newcommand{\ra}{\overline}
		\newcommand{\wh}{\widehat}
		\newcommand{\wt}{\widetilde}
		\def\End{\mathrm{End}}
		\def\Ker{\mathrm{Ker}}
		\def\mfk{\mathfrak}
	\theoremstyle{plain}
		\newtheorem{teo}{Teorema}[section]
		\newtheorem{coro}[teo]{Corollary}
		\newtheorem{lema}[teo]{Lemma}
	\theoremstyle{definition}
		\newtheorem{defi}[teo]{Definition}
        	\newtheorem{rmk}[teo]{Remark}
		\newtheorem{example}[teo]{Example}
	\theoremstyle{remark}
\def\udu{\raisebox{-3pt}{\includegraphics[scale=0.6]{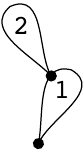}}}
\def\ud{\raisebox{-3pt}{\includegraphics[scale=0.6]{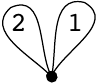}}}
\def\du{\raisebox{-3pt}{\includegraphics[scale=0.6]{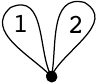}}}
\def\utud{\raisebox{-5pt}{\includegraphics{m3-a.pdf}}} 
\def\dutu{\raisebox{-5pt}{\includegraphics{m3-b.pdf}}} 
\def\ca#1#2#3{\raisebox{#3}{\includegraphics[scale=#2]{ca#1.pdf}}}
\def\m3{ \utud - \dutu }
\def\bideg{\mathrm{bideg}}
\def\ZZ{\mathbb Z}
\title{A Cacti theoretical interpretation of the axioms of bialgebras and $H$-
module algebras}
\author{Marco A. Farinati\thanks{
Member of CONICET. Partially supported by UBACyT X051, PICT
2006-00836. mfarinat@dm.uba.ar} and  Leandro E. Lombardi\thanks{
Partially supported by UBACyT X051, PICT 2006-00836. llombard@dm.uba.ar}} 
\def\g{\mathfrak g}
\begin{document}
\maketitle

\begin{abstract}
We establish a dictionary between the Cacti algebra axioms
on a Cacti algebra structure with underlying free
associative algebra, under suitable good behavior
with degrees. Using these ideas, for an associative algebra $A$ and
a bialgebra $H$,  we also translate
Cacti algebra maps $\Om(H)\to C^\bullet(A)$ (where
$\Om(H)$ stands for the cobar construction on $H$ and $C^\bullet(A)$
is the Hochschild cohomology complex)
with $H$-module algebra structures on $A$, and illustrate
with examples of applications.
\end{abstract}

\section*{Introduction and preliminaries}

In \cite{Kad05}, the author defines a Cacti algebra
structure on $\Om(H)$, the cobar construction of a d.g. bialgebra $H$.
Recall that $\Om(H)=TV$ the tensor algebra on $V=\Ker\epsilon$,
with differential of the form $d_i+d_{\Delta}$. That is, one differential
 coming from the original differential on the d.g. bialgebra $H$
and a second one coming from its coalgebra structure.
In the mentioned article, the author works over $\ZZ/2\ZZ$. In \cite{justin},
signs are introduced for any characteristic. This construction
give examples of Cacti algebras of special type, they are not only
graded but naturally bigraded, and operations have extra
properties with respect to this bigrading. We call these 
properties to be {\em well graded}
(see definition \ref{def_cacti_compatible_con_el_grado}).
We prove a kind converse of this construction that include the characterization of
the image of the functor $\Om:d.g.bialg\to Cacti$-$alg$. More precisely,
we prove that if a Cacti algebra $T$ is well graded and
freely generated as associative algebra by elements of (external) degree one,
namely $T\cong TV$ as associative algebras ($TV$=the tensor algebra on a graded vector space),
then the Cacti algebra structure on $T$ determines uniquely a d.g. bialgebra
structure on $H=V\oplus k1_H$, and hence $T=\Om(H)$ for a uniquely
determined d.g.bialgebra $H$.

The examples arising from the Kadeishvili construction
are not the only well graded ones.
The historically most important
family of Cacti algebras, namely the Hochschild complex $C^\bullet(A)$
of an associative (eventually d.g.) algebra $A$ is also a well
 graded Cacti algebra.
In lemma~\ref{lemamorfismo}), we study morphisms between well graded Cacti algebras.
A consequence of this result can be seen as a continuation of
 the dictionary
between Cacti algebras and bialgebras. More precisely (Theorem \ref{teomorfismo})
we prove that, given an (eventually d.g.) bialgebra $H$ and associative algebra $A$,
the set of morphism between bigraded Cacti algebras $\{\Om(H)\to
C^\bullet(A)\}$ is in 1-1 correspondence with structures of $H$-module algebra on $A$.

We end with examples of applications to the Gerstenhaber algebra structure on the
Hochschild cohomoly of an algebra.

For the purpose of this work, a Cacti algebra in al algebra over de operad $\X_2$ defined in~\cite{BF04}.
This operad (up to sign a convention) is called $S_2$ in~\cite{McS02}, spineless cacti in~\cite{Kau07} and
is also the operad codifying Gerstenhaber-Voronov algebras~\cite{GV95}. We briefly recall the definiton: a {\em Cacti algebra}  is a 
differential graded vector space $(T,d)$ with operations

\begin{enumerate}
 \item  $C_2:T\ot T\to T$, an associative product.
 \item for any $n\geq 2$, $B_n:T^{\ot n}\to T$ are brace operations.
 \end{enumerate}
satisfying a set of compatibility relations that we list below. 
In order to write them, it is convenient to
use a graphic representation:
\[
C_2 = \ud, \quad \quad B_2 = \udu, \quad \quad B_n =  \ca{12131n1}{1}{-10pt}
\]
For example, the brace relations   can be described pictorically as
\begin{eqnarray*}
B_n \circ_1 B_m & = & 
\raisebox{-5pt}{\includegraphics[scale=0.7]{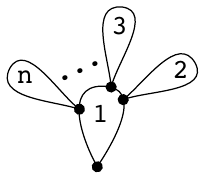}} \circ_1 \raisebox{-5pt}{\includegraphics[scale=0.7]{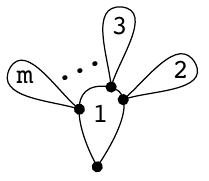}} \\ 
& = & \sum_\text{possibilities} \pm \raisebox{-10pt}{\includegraphics[scale=0.7]{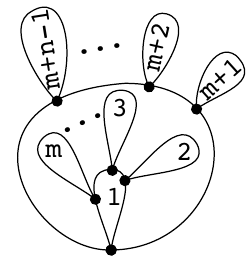}}  \\
\end{eqnarray*}
where the sign is given by the permutation
of the dots belonging to $B_n$ and $B_m$.

\

The distributivity law betwee  $C_2$ and  $B_m$ is:
\[
\raisebox{-5pt}{\includegraphics[scale=0.7]{Bm.pdf}} \circ _1  \raisebox{-3pt}{\includegraphics[scale=0.7]{ca12.pdf}} = \sum_k \raisebox{-5pt}{\includegraphics[scale=0.7]{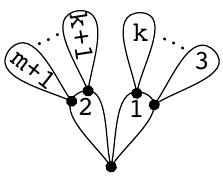}}
\]

And finally, the relation with the differential is
$\partial C_2=0$ and

\[
\partial B_m=\partial\big(
\raisebox{-8pt}{\includegraphics[scale=0.7]{Bm.pdf}} \big)
=  \raisebox{-8pt}{\includegraphics[scale=0.7]{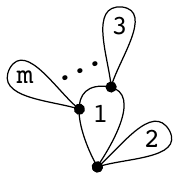}}
+ \sum_{i=2}^{m-1} (-1)^{i+1} \raisebox{-8pt}{\includegraphics[scale=0.7]{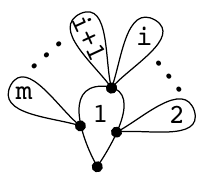}}
+(-1)^{m+1}  \raisebox{-8pt}{\includegraphics[scale=0.7]{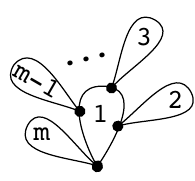}}
\]
   where, if $P:T^\ot n\to T$ is an operation,
 $\partial P$ is by definition the operation given by
  \[
  (\delta P)(t_1\ot \cdots \ot t_n)=d(P(t_1\ot \cdots \ot t_n))-
  \sum_{i=1}^n (-1)^{|P|+\sum\limits_{j=1}^{i-1} |t_j|}
  P(t_1\ot \cdots \ot d(t_i)\ot\cdots \ot t_n)
\]
In particular, $\partial C_2=0$ means that in any Cacti algebra, the differential
is a derivation for the product  $C_2$.

%

\section{$\Cacti$-algebra structure in $TV$}

Let $V$ be a graded vector space, then
$TV=\oplus_{n\geq 0}V^{\ot n}$ is a free associative algebra, and it
is bigraded taking, for $v_k\in V_{i_k}$
\[
\bideg (v_1\ot\cdots\ot v_n)=(\sum_{i=1}^n|v_i|_V,n)
\]
we call $\sum_{i=1}^n|v_i|_V$ the {\em internal}
degree, and $n$ the {\em external} or tensorial degree.
We remark that the total degree
\[
|v_1\ot\cdots\ot v_n|_{tot}=(\sum_{i=1}^n|v_i|_V)+ n
\]
is the same as the usual degree on the tensor
algebra of $\Sigma V$,
the suspension of $V$. This total degree is most usually considered,
but we prefer to keep the information of the bigrading by reasons that will be
clear in the rest of this work.

\begin{rmk}\label{dDeltaprima} Let $V$ be a trivially graded vector space (i.e. $V=V_0$), then the data of a
a square zero differential in $A=TV$ of total degree one is equivalent
to give a (non necessary counital)
coassociative coalgebra structure in $V$.

If $V$ is arbitrarily graded, then the data of a square zero differential in
$TV$ is equivalent to a differential in $V$ together with
an up-to homotopy coassociative coalgebra structure in $TV$, but if
the differential in $TV$ is of the form $D = d_i + d_e$
where $\bideg d_i=(1,0)$ and $\bideg d_e=(0,1)$ then
to give $D$ is equivalent to a  (strict) coassociative
differential coalgebra structure in $V$. Take simply $d_V=d_i|_V$, and
$\Delta' =d_e|_{V}$.
\end{rmk}

\begin{rmk}
 \label{DeltaprimaDelta}
The non necessarily counital coassociative structures in $V$
are in 1-1 correspondence with the unital coassociative
structures in
$H:=V\oplus k 1_H$, where $1_H$ is a new formal element satisfying
$\Delta(1_H) = 1_H \ot 1_H$. The correspondence is given
by
$\Delta'\leftrightarrow \Delta$ with

\[
\Delta:H\to H\ot H
\]
\[
\Delta(v):=\Delta'(v)+1_H\ot v+v\ot 1_H
=v_{1}\ot v_{2}+1_H\ot v+v\ot 1_H\]
for $v\in V$,  $\Delta 1_H:=1_H\ot 1_H$. And given
 $\Delta:H\to H\ot H$, let $\pi:H\to V$ be the canonical projection
 with respect to the direct sum decomposition $H=V\oplus k1_H$, then
 \[
 \Delta' (v):=(\pi\ot\pi)\circ\Delta
 \]
 The counit in $H$ is given by $\epsilon(v)=0$ if $v\in V$ and
 $\epsilon(1_H)=1$. Working with elements one can easily see that the coassociativ equation for $\Delta$ and $\Delta' $ is {\em the same},
 so $\Delta$ is coassociative iff $\Delta' $ is, and letting
 $1_H$ having internal degree 0 (but tensorial degree 1), and
 $d_i(1_H)=0$ the correspondence works equally well for the graded case
 \end{rmk}
 
We will  consider Cacti-algebra structures on $TV$ of a certain type.
Recall that the cactus $C_2=\ud$ provides a strict associative product.
We will say that
the cacti algebra structure on $TV$ {\em extends} the one in $TV$
if $\ud(x,y)=x\ot y$ (where $x,y\in TV$).
Notice that in $\ra T V$, this property implies that
every element of $A$ can be obtained
from $V$ and the action of the cactus $C_n$, namely
if   $\mbf x = x_1 \ot \dots \ot x_n$ then
 $\mbf x = C_n(x_1, \dots, x_n)$.

Next definition is motivated by the example of the Hochschild complex
 $C^{\bullet}(A)$ of an associative algebra $A$.
Recall that in
$C^{\bullet}(A)$, if
$f:A^{\ot n}\to A$, then the brace operation is a formula of type
\[
f \{ g_1, \dots, g_k\} = \sum \pm f(\cdots,g_1(-),\cdots,g_2(-),\cdots)
\]
and this implicitely says that if $
n<k$ then
\[
f \{ g_1, \dots, g_k\} = 0
\]
These brace operations corresponds to the cactus
\[
B_{k+1}  =  \ca{12131k11}{1}{-10pt}
\]

\begin{defi}
\label{compdg} \label{def_cacti_compatible_con_el_grado}
Let $C$ be bigraded  vector space
\[
C = \bigoplus_{p,q} C^{p,q}
\]
with a Cacti algebra structure on it with respect to the total degree.
We will say that this structure is {\em well graded} if
\[a \in C^{ \bullet,p}, p < n-1  \implies B_n(a, \dots) = 0 \]
and the differential is compatible with the bigrading in the sense that
$d=d_i+d_e$ where
\[
\begin{array}{ccccc}
d_i & :  & T^{n,\bullet} & \to & T^{n+1,\bullet} \\
d_e & :  & T^{\bullet, n} & \to & T^{\bullet, n+1} \\
\end{array}
\]
Moreover, we ask  $C_2$ and $B_m$ ($m\geq 2$) to be homogeneous
 with respect to the internal degree.
\end{defi}

Notice that if $C$ is a cactus algebra that is graded (and not bigraded),
then it can be considered as trivially bigraded with $C^{0,q}=C^q$
and $C^{p,q}=0$ for $p\neq 0$, and the definition of well graded
makes sense.

\begin{example} The Hochshild complex of an associative
algebra is a well graded Cacti algebra, this example is trivially bigraded.
But also if $A$ is a differential graded associative algebra, then
$C(A)$ is well graded. In both cases, the bidegree is given by
\[
C^{p,q}(A)=\Hom(A^{\ot q},A)_p
\]
where $\Hom(-,-)_p$ is the set of homogeneous linear transformations
of degree $p$ (between two graded vector spaces).  
\end{example}

\begin{example} If $(H,*,\Delta,d)$
 is a differential graded associative bialgebra,
then in particular it is a differential graded coalgebra, and
the cobar construction makes sense
\[
\Om(H)=(TV,d)
\]
where $V=\ra H=\Ker\epsilon$ and $d=d_H+d_{\Delta}$.
In \cite{Kad05}, Kadeishvili exhibits (in characteristic 2)
a Cacti-algebra structure
on $\Om(H)$ coming from the bialgebra structure of $H$. In
\cite{justin} the author introduce appropriate signs showing that $\Om(H)$
 is a Cacti algebra in any characteristic (e.g 0). In this construction, the
 brace structure is given by
 \[
B_m (\mbf x,\ol{ \mbf y} )
:=
\hskip -0.5cm
\sum_{1\le i_1 < \ldots < i_{m-1} \le n}
\hskip -0.5cm
\pm x_1 \otimes \ldots \otimes (x_{i_1} * \mbf y_1) \otimes \ldots \otimes (x_{i_{m-1}} * \mbf y_{m-1}) \ot \ldots \ot  x_n
\]
where in each term, the sign is the Koszul-permutation sign of the symbols

\mbox{$* \ldots * x_1 \dots x_n \mbf y_1 \dots \mbf y_{m-1} \mapsto
x_1 \dots x_{i_1} * \mbf y_1 x_{i_1+1} \dots x_{i_{m-1}} * \mbf y_{m-1} \dots x_n$}. \\

and the notation is $\mbf x=x_1\ot\cdots\ot x_n$, and
$\ra{\mbf y}=(\mbf
y_1\dots,\mbf y_{m-1})$.
We remark that here, for $x\in H$, its symbol has
degree $|x|_{tot}=|x|_H+1$, and if
$\mbf y=y_1\ot\cdots\ot y_n$, then its symbol has
degree $n+\sum_{i=1}^n|y_i|_H$. \\

In this formula, it is implicitly assumed that $m-1\leq n$,
otherwise it is zero, so this is also an example of well-graded
Cacti algebra.
\end{example}

\begin{defi}
Let $C$ be an arbitrary Cacti algebra and let us denote
 $*$ the operation induced by  $B_2 = \udu $ in $C$.
 More precisely,
 $$a * b := (-1)^{|a|} \udu (a,b) = (-1)^{|a|} B_2 (a,b)$$
\end{defi}

This product is pre-Lie by definition. In well graded Cacti algebras, it
is associative when restricted to external degree one, as we can see in the following lemma.

\begin{lema} \label{lema_pre-Lie_es_asoc}
Let $C$ be a well graded Cacti algebra and set
$C^1=\oplus_p C^{p,1}$ the subspace of elements of external
degree one. Then, for $y,z\in C$ and $x\in C^1$
\[
(x*y)*z= x*(y*z)
\]
Notice that $C^n*C^m\subseteq C^{n+m-1}$ so, in articular,
$(C^1,*)$ is a (non necessarily unital) associative algebra
and $C^n$ is a $C^1$-module.
  \end{lema}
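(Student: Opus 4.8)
The plan is to extract the desired associativity from the single brace relation $B_2 \circ_1 B_2$, combined with the well graded vanishing condition. Specializing the brace relation displayed above to the case $n = m = 2$ gives the classical Gerstenhaber--Voronov identity
\[
B_2(B_2(x,y),z) = B_2(x, B_2(y,z)) + B_3(x; y, z) \pm B_3(x; z, y),
\]
valid for all $x,y,z\in C$, where the two correction terms are values of the degree-two brace $B_3$ with base argument $x$, and the $\pm$ is the Koszul sign coming from permuting the corresponding dots. First I would rewrite this identity in terms of the operation $*$ using $a*b = (-1)^{|a|}B_2(a,b)$; a short sign bookkeeping turns the two outer terms into $(x*y)*z$ and $x*(y*z)$, so that the associator $(x*y)*z - x*(y*z)$ becomes exactly a signed combination of $B_3(x;y,z)$ and $B_3(x;z,y)$.

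The decisive step is then to invoke well-gradedness on the base argument $x$. Since $x\in C^1=\bigoplus_p C^{p,1}$ has external degree $1$, and since $1 < 3-1 = 2$, the defining condition of a well graded Cacti algebra (namely $a\in C^{\bullet,p}$ with $p<n-1$ forces $B_n(a,\dots)=0$) applies with $n=3$ and annihilates both correction terms: $B_3(x;y,z)=0=B_3(x;z,y)$. Hence the associator vanishes and $(x*y)*z = x*(y*z)$ for arbitrary $y,z\in C$ and $x\in C^1$.

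It remains to record the closure statement. The brace $B_2$ inserts its second argument into one tensor slot of its first, so on external (tensorial) degrees it sends $T^{\bullet,n}\otimes T^{\bullet,m}$ into $T^{\bullet,n+m-1}$; thus $C^n*C^m\subseteq C^{n+m-1}$. Taking $n=1$ yields $C^1*C^1\subseteq C^1$ and $C^1*C^m\subseteq C^m$, so $(C^1,*)$ is a (not necessarily unital) associative algebra and each $C^n$ is a left $C^1$-module.

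The main obstacle I anticipate is purely the sign bookkeeping: one must check that the Koszul signs built into $*$ combine with the signs attached to the two $B_3$-terms in the brace relation so that the left-hand side is precisely $(x*y)*z$ and the first right-hand term is precisely $x*(y*z)$, with the two remaining summands being honest $B_3$-values rather than some rearrangement that mixes in the differential or the product $C_2$. Once that identification is clean, the vanishing argument is immediate and uses only the external-degree hypothesis on $x$.
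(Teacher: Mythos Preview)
Your proposal is correct and follows essentially the same route as the paper: compute the associator, recognize it as governed by the brace relation $B_2\circ_1 B_2$ (the paper writes this as $B_2\circ_1 B_2 + B_2\circ_2 B_2$ in cacti diagrams and observes the nested term cancels), and then kill the two surviving $B_3$-type cacti using the well-graded hypothesis on $x\in C^1$. The only thing the paper adds is the explicit sign bookkeeping you flagged as the remaining obstacle, confirming that the associator is exactly $(-1)^{|y|+1}$ times the sum of the two $B_3$-cacti evaluated at $(x,y,z)$.
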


\begin{proof} We will compute the associator and see that it
is governed by $B_3$, which by hypothesis is zero when
the first variable belongs to $C^1$:

Let $x,y,z \in C$ with $x\in C^1$, we have
\begin{eqnarray*} 
(x*y)*z - x*(y*z)  & =  & (-1)^{|y|+1} \udu(\udu(x,y),z) - (-1)^{|x|+|y|}\udu(x,\udu(y,z))  \\
& =  &  (-1)^{|y|+1} \Big( \udu(\udu(\; ,\; ),\; ) + \udu(\;,\udu(\;,\;)) \Big) (x,y,z) \\
& = &  (-1)^{|y|+1} \left( \udu \circ_1 \udu + \udu \circ_2 \udu \right) (x,y,z)\\
& = &  (-1)^{|y|+1} \left(
\raisebox{-5pt}{\includegraphics[scale=0.5]{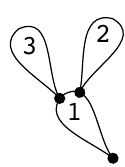}}
+ \raisebox{-5pt}{\includegraphics[scale=0.5]{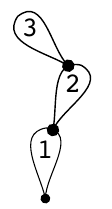}}
- \raisebox{-5pt}{\includegraphics[scale=0.5]{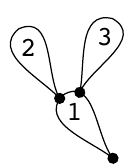}}
- \raisebox{-5pt}{\includegraphics[scale=0.5]{ca12321.pdf}} \right) (x,y,z) \\
& = &  (-1)^{|y|+1} \left(
\raisebox{-5pt}{\includegraphics[scale=0.5]{ca12131.pdf}}
- \raisebox{-5pt}{\includegraphics[scale=0.5]{ca13121.pdf}}
\right) (x,y,z) \\
\end{eqnarray*}
(Signs are due to Koszul rule for the total degree of the symbols
$x,y,z \in C$ and $B_2$.)\\

Because we assume $C$ is well graded, the cactus
\raisebox{-2pt}{\includegraphics[scale=0.5]{ca12131.pdf}}   and
 \raisebox{-2pt}{\includegraphics[scale=0.5]{ca13121.pdf}}
act trivially when the first variable is in $C^1$, so the associator
vanishes.
\end{proof}

\begin{coro}
Let $V$ be a graded vector space and suppose a well graded Cacti algebra
 structure is given in $\ra T V$, then this structure induce by restriction
an associative product $*:V\times V\to V$.
\end{coro}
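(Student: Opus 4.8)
The plan is to recognize this corollary as a direct specialization of Lemma~\ref{lema_pre-Lie_es_asoc}. First I would observe that, under the bigrading on $TV$, the generating subspace $V=V^{\ot 1}$ is exactly the homogeneous part of external (tensorial) degree one: an element $v\in V_p$ has bidegree $(p,1)$, so in the notation of the preceding lemma $V=\bigoplus_p C^{p,1}=C^1$, where we take $C=\ra T V$ with the assumed well graded Cacti structure. Thus the lemma applies verbatim to $C=\ra T V$.

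Next I would invoke the degree-counting inclusion $C^n*C^m\subseteq C^{n+m-1}$ recorded in the lemma. Taking $n=m=1$ gives $C^1*C^1\subseteq C^1$, so the pre-Lie product $*$ induced by $B_2$ closes on $V$ and restricts to a well-defined binary operation $*\colon V\times V\to V$. Because $B_2$ is homogeneous for the internal degree by the well graded axiom, this restricted product is moreover graded, although that refinement is not needed for the statement.

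Finally, associativity is immediate: for $x,y,z\in V=C^1$ the first argument $x$ lies in $C^1$, so Lemma~\ref{lema_pre-Lie_es_asoc} yields $(x*y)*z=x*(y*z)$, and hence $(V,*)$ is an associative algebra. I do not expect any genuine obstacle here: the only bookkeeping is the identification of $V$ with $C^1$ and the verification that the sign convention $a*b=(-1)^{|a|}B_2(a,b)$ — already absorbed into the lemma — does not interfere with the restriction. All the real content was carried out in proving the lemma, where the associator was shown to be governed by the cacti \raisebox{-2pt}{\includegraphics[scale=0.5]{ca12131.pdf}} and \raisebox{-2pt}{\includegraphics[scale=0.5]{ca13121.pdf}}, i.e.\ by $B_3$, which vanishes once the first variable has external degree one.
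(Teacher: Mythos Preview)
Your proposal is correct and matches the paper's intent exactly: the corollary is stated without proof precisely because it is the specialization of Lemma~\ref{lema_pre-Lie_es_asoc} to $C=\ra T V$ with $C^1=V$, and your argument spells out that identification and the closure $C^1*C^1\subseteq C^1$ faithfully. There is nothing to add or correct.
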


From now on we concentrate in the bigraded associative algebra
$TV$, and we will consider
all possible well-graded Cacti algebra structures on it.
We recall that the external degree is the tensorial degree, and hence
a $d$-dimensional cactus acts as an operation of (external)
 degree $-d$,
and the differential is of total degree one.

\begin{rmk}
A (non necessarily unitary) operation $*:V\times V\to V$ can
be extended to $H:=V \oplus k1_H$ declaring $1_H$ as formal unity for $*$, namely
\[
1_H*v:=v=:v*1_H\ (\forall v\in V) \quad \text{y} \quad 1_H*1_H:=1_H
\]
Notice that  $*$ es associative in $V$ if and only if  it is associative in $H$.

Recall that a (well graded) differential in $TV$ induces (by restriction to $V$ a coassociative and counitary comultiplication in $H$ via
\begin{eqnarray*}
\Delta 1_H & = & 1_H \otimes 1_H \\
\Delta v & = & d_e(v)+v \otimes 1_H + 1_H \otimes v \\
\end{eqnarray*}
\end{rmk}

In this way, if $TV$ is given a structure of a well graded Cacti algebra with
multiplication equal tensor product, then $H$ is simultaneously a
counitary coassociative coalgebra, and a unitary associative algebra.
Next theorem shows that $H$ is necessarily a bialgebra. In other words, 
the coproduct in $H$ is multiplicative, and hence $TV=\Om(H)$,
the Kadeishvili construction.

\begin{teo}
\label{VvsTV} \label{teo_cobar}
Let $V$ be a graded vector space, the following are equivalent

\begin{itemize}
\item[(i)] To give a well graded $\Cacti$ algebra structure on $\ra T  V$,
extending the (free) associative product in $\ra TV$ and well graded
 with respect to the
bigradng on $\ra TV$.
\item[(ii)] To give a unitary and counitary differential graded associative bialgebra structure on $H = V \oplus k 1_H$.
\end{itemize}
More precisely, the correspondence is given i the following way:

From (i) to (ii), the internal differential in $\ra T  V$, restricted to $V$ gives a differential on $V$, and the external differential induces
the restricted comultiplication in $V$, that produces the counitary
comultiplication in $H$. The action of  $B_2$ gives the
 associative product.
 
 From (ii) to (i), we only notice that $(TV,d)=\Om(H)$, and Kadeishvili
 construction gives a Cacti algebra structure that is well graded.
 \end{teo}

\begin{proof}
We ony need to prove $(i)\To(ii)$, and in this part,
we only have to check that the comultiplication in $H$ is multiplicative,
Namely.

\[
\Delta(x * y) =  (-1)^{|x_{(2)}|_i |y_{(1)}|_i} (x_{(1)} * y_{(1)}) \ot (x_{(2)} * y_{(2)})
\]
Recall the Sweedler-type notation
$\Delta x = x_{(1)} \ot x_{(2)}$.
We observe that, for $a\in TV$,
\[
d a = \Delta(a)  - [1_H, a] + d_i
\]
where $[-,-]$ is the super commutator (using the total degree) in $TH$,
so
\[
[1_H,a]=1_H\ot a-(-1)^{|a|} a\ot 1_H = 1_H\ot a-(-1)^{|a|_H+1}a\ot 1_H
\]

Now, in every $\Cacti$-algebra one has
\[
\du - \ud = \delta \udu = d \udu + \udu d
\]
because the first equality comes from computing the boundary of
the cactus $\udu$ and the second is the differential of an operation.
When evaluating in elements, using that
$d = \Delta - [1, \ ] + d_i$ one gets
\begin{eqnarray*}
 \du - \ud (x,y)  & = & d \udu (x,y) + \udu (dx,y) + (-1)^{|x|} \udu(x,dy) \\
\du - \ud (x,y)
& = &
\Delta( \udu (x,y) ) +  \udu (\Delta x,y)  + (-1)^{|x|}  \udu (x,\Delta y) \\
& -&
[1,\udu (x,y)] -  \udu ([1,x], y)  - (-1)^{|x|} \udu (x,[1,y])\\
& + &
d_i( \udu (x,y) ) +  \udu (d_i x,y)  + (-1)^{|x|}  \udu (x,d_i y) \\
\end{eqnarray*}
or equivalently (changing notation from
 $\udu$ to $*$)
\begin{eqnarray*}
-[x,y]
& = & (-1)^{|x|} \Delta(x*y) +  (-1)^{|x|+1} \Delta x * y+ x  * \Delta y \\
& + & - (-1)^{|x|} [1,x*y] - (-1)^{|x|+1} [1,x] * y -  x  *[1,y]\\
& + &   (-1)^{|x|} d_i(x*y) +  (-1)^{|x|+1} d_i x * y + x  * d_i y
\end{eqnarray*}
In order to prove what we want, we will use some identities:
\begin{eqnarray*}
d_i(x*y)
& = & d_i x * y + (-1)^{|x|_i} x  * d_i y \\
\ [x,y]
& = & (-1)^{|x|} [1,x*y] - (-1)^{|x|} [1,x] * y \\
x * [1,y]  
&=& (-1)^{[x|+1} \Delta x * y \\
x * \Delta y & = & (-1)^{|x|+1 + |x_{(2)}|_i |y_{(1)}|_i} (x_{(1)} * y_{(1)}) \ot (x_{(2)} * y_{(2)}) \\
\end{eqnarray*}
The first one is simply that the  internal differential
is a derivation for the product.

The second, comes from the identity in Cacti
\[
\udu \circ_1 \ud =
\ca{1312}{0.9}{-3pt} + \ca{1232}{0.9}{-3pt}
\]
because, if one evaluate this in elements, we get that $*$
verifies a left distributive law with respect to tensor product:
\begin{eqnarray*}
(a \ot  b)* c  & = & a \ot (b*c) + (-1)^{|b|(|c|+1)} (a*c) \ot b \\ 
& = & (a*1) \ot (b *c) + (-1)^{|b|(|c|+1)} (a*c) \ot (b*1)  \\
\end{eqnarray*}
and this implies immediately the equation (considering $a=1_H$, $b=x $ and $c = y$).

The two last equations have terms of the form $a * (b \ot c)$ (on their
left and side). The central idea is that, in any $\Cacti$-algebra,
even thought $*$ is not distributive on the right with $\ot$, the failure of this
is given by the boundary of $B_3$. The hypothesis of well graded
allow us to control it. In this way, we obtain that
$a * (b \ot c)$ has to be the diagonal action.
In order to see this, we calculate $\delta B_3$:
\[
\delta  \ca{12131}{0.6}{-5pt}  = \ca{2131}{0.6}{-5pt} - \ca{1231}{0.6}{-5pt} + \ca{1213}{0.6}{-5pt}
\]
and when we evaluate in elements $x,y,z \in V$ we have
\begin{eqnarray*}
\delta \ca{12131}{0.6}{-5pt} (x,y,z) & = &  \big(  \ca{2131}{0.6}{-5pt} - \ca{1231}{0.6}{-5pt} + \ca{1213}{0.6}{-5pt} \big) (x,y,z) \\
& = & 
 (-1)^{|x||y|+|x|+|y|} y \ot (x * z)  \\
&-&(-1)^{|x|}   x * (y \ot z) \\
&+&(-1)^{|x|}   (x * y) \ot z \\   
\end{eqnarray*}
But also $\delta B_3= d B_3 - B_3 d$ in $\ra T V$, so
\begin{eqnarray*}
(\delta B_3) (x,y,z) & = &
d ( \underbrace{B_3 (x,y,z)}_{=0}) - B_3 (d x, y,z)  \\
& + &(-1)^{|x|} \underbrace{B_3 (x,d y,z)}_{=0} -  (-1)^{|x|+|y|} \underbrace{ B_3(x,y, d z)}_{=0}  \\
\end{eqnarray*}
(the vanishing terms are due to the well grading hypothesis)
So,
\[
- B_3 (d x, y,z) =
 (-1)^{|x||y|+|x|+|y|} y \ot (x * z)
-(-1)^{|x|}   x * (y \ot z)
+(-1)^{|x|}   (x * y) \ot z \\ 
\]
Now, for elements in tensorial degree two $\mbf x = x_1 \ot x_2$,
the cactus $B_3$ acts by
\[
B_3(\mbf x,y,z) = B_3(x_1 \otimes x_2,y,z) = (-1)^{|x_2|+|y|+|x_2||y|} (x_1 * y) \otimes (x_2 * z)
\]
because in $\Cacti$ we have
\[
\ca{12131}{0.7}{-2pt} \circ_1 \ud = \ca{131412}{0.5}{-2pt}  + \ca{131242}{0.5}{-2pt} + \ca{123242}{0.5}{-2pt} 
\]

where only the second term acts non trivially in
 $V^{\otimes 4}$.

Using this identity for $\mbf x = dx$, recall
\[
dx = \Delta(x)  - [1_H , x] =  x_{(1)} \ot x_{(2)} - 1_H \ot x + (-1)^{|x|} x \ot 1_H
\]
one has
\begin{eqnarray*}
 B_3 (d x, y,z)
& = & B_3(x_{(1)} \ot x_{(2)} - 1_H \ot x + (-1)^{|x|} x \ot 1_H,y,z) \\
& = & (-1)^{|x_{(2)}|+|y|+|x_{(2)}||y|} (x_{(1)} * y) \otimes (x_{(2)} * z) \\
& &-  (-1)^{|x|+|y|+|x||y|} y \ot (x * z) \\
&  &+ (-1)^{|x|+1+|y|+1|y|} (x*y) \ot z \\ 
& = & (-1)^{|x_{(2)}|+|y|+|x_{(2)}||y|} (x_{(1)} * y) \otimes (x_{(2)} * z) \\
&  &- (-1)^{|x|+|y|+|x||y|} y \ot (x * z) \\
&  & -(-1)^{|x|} (x*y) \ot z \\
\end{eqnarray*}
where from one gets the equation
\[
x*(y\otimes z) =
(-1)^{|x| + |x_{(2)}|+|y|+|x_{(2)}||y|} (x_{(1)} * y) \otimes (x_{(2)} * z)
\]
namely, the diagonal action.

From this general equation, using $[1_H\ot y]$ instead of $y\ot z$,
we deduce
\[
x * [1,y]  
= (-1)^{[x|+1} \Delta x * y
\]
And replacing again $y \ot z$ by $\Delta y = y_{(1)} \ot y_{(2)}$ (
and of course taking into account the signs, noticing that
if  $v \in V$ then $|v|_{tot} = |v|_i + 1$):
\[
  x * (\Delta y) 
  =
(-1)^{|x|+1 + |x_{(2)}|_i |y_{(1)}|_i} (x_{(1)} * y_{(1)}) \ot (x_{(2)} * y_{(2)}) 
\]
that is precisely the last thing what we needed to verify.
\end{proof}

\begin{example} \label{ex_Lambdag} Let $\mathfrak g$
be a  Lie algebra and consider $H=U(\mathfrak g)$, and as always 
 $V=\ra U( \lie g) = \Ker(\epsilon : U(\mathfrak g)\to k)$, then
 the cohomology of $(\ra TV,d)$ is
$$ H^\bullet \left( \ra T V \right) \simeq \Lambda^\bullet\mathfrak g$$
(where here $\Lambda \mathfrak g$ is the non unital exterior algebra
in  $\mathfrak g$). Even more, in degree one, the Lie bracket in
 $H^1(\ra T V,d)$ is the commutator of the primitive elements in
  $U\mathfrak g$, namely, the Lie bracket in $\mathfrak g$.
 
  Since
$\Lambda^\bullet\mathfrak g$ is generated (as associative algebra)
in degree one, the Gerstenhaber structure is determined by the bracket
in this degree. So we get the standard Gerstenhaber algebra structure
in $\Lambda^\bullet\mathfrak g$
from the $\Cacti$-algebra in $ \ra T V$. In other words,
the Gerstenahaber algebra structure in
 $\Lambda^\bullet\mathfrak g$ lifts to a well graded
 $\Cacti$-algebra structure in  $\ra T V = \ra T \left( \ra U(\mathfrak g)\right)$.

As a sub example, if $W$ is any vector space and $\mathfrak g =
 \mathrm{Lie}(W)$ is the free Lie algebra on $W$, then
 $\Lambda^\bullet\mathfrak g = \Lambda^\bullet \mathrm{Lie}(W)$ is
 the free Gerstenhaber algebra in $W$. Again this structure
 lifts to a (well graded)  $\Cacti$-algebra structure in $\ra{ T} V = 
\ra T  \left(\ra{U ( \mathrm{Lie} (W))} \right) = \ra T \ \ra{T W}$,
in the sense that its $\Cacti$-algebra structure induces the
Gerstenhaber algebra structure on its homology.
\end{example}

\section{Morphisms and well gradings}

Recall the notation, for a bigraded algebra
 $T=\bigoplus_{p,q}T^{p,q}$
 \[T^n:=\bigoplus_{q\in\Z}T^{n,q}\]
Next Lemma is relatively simple to proof, but is the key point
of our main result. It formalizes the fact that in $TV$, all
the
$\Cacti$-algebra structure depends on $B_2$, the differential, and the associative product.

\begin{lema}\label{lemamorfismo}
Let  $T$ and $C$ be two well graded Cacti algebras, and $f:T\to C$ a
linear transformation, that is homogeneous with respect to the bigrading.
If we assume that
 \begin{itemize}
  \item $T$ is generated by $T^1$ as associative algebra (in
  particular $T=\oplus_{n\geq 1} T_n$),
  \item $f$ is a morphism of associative algebras,
\item $f(dt)=df(t)$ for all  $t\in T^1$,
\item $f|_{T^1}:(T^1,*)\to (C^1,*)$ is a morphism of associative
algebras,
  \end{itemize}
then $f$ is a morphism of  Cacti-algebras.
\end{lema}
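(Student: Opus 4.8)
The plan is to check that $f$ intertwines every piece of the Cacti structure by reducing, in each case, to the external degree one part $T^1$, which generates $T$. The guiding structural observation — valid in any well graded Cacti algebra generated in external degree one, hence in both $T$ and $C$ — is that the braces carry no independent information: iterating the distributivity law $B_m\circ_1 C_2$ and discarding the terms annihilated by well-gradedness forces each $B_m(a_1,\dots,a_m)$ to equal an explicit $C_2$-product of factors $x_i* a_j$ with $x_i\in T^1$. In the minimal case this reads $B_m(x_1\ot\cdots\ot x_{m-1},y_1,\dots,y_{m-1})=\pm (x_1*y_1)\ot\cdots\ot(x_{m-1}*y_{m-1})$, exactly as in the computation of $B_3$ in the proof of Theorem~\ref{teo_cobar}. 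Consequently it suffices to prove that $f$ respects $C_2$ (given), the differential, and the operation $*$ with first argument in $T^1$; the higher braces then follow automatically, since the displayed identity holds verbatim in $C$.

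First I would extend compatibility with $d$ from $T^1$ to all of $T$. Homogeneity of $f$ together with $f(T^1)\subseteq C^1$ forces $f$ to preserve external degree, so the induction on external degree that writes an element as a product of generators is well founded. As $\partial C_2=0$, the differential is a derivation for the product in both algebras; using that $f$ is an algebra morphism and $f(dt)=df(t)$ on $T^1$, the derivation identity propagates the equality $f\circ d=d\circ f$ to any product of generators.

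The technical heart is to prove $f(x*w)=f(x)*f(w)$ for $x\in T^1$ and arbitrary $w$, by induction on the external degree of $w$; the base case $w\in T^1$ is a hypothesis. For the inductive step I would write $w=y\ot w'$ with $y\in T^1$ and reuse the boundary computation of Theorem~\ref{teo_cobar}: since $B_3(x,-,-)=0$ for $x\in C^1$ by well-gradedness, the relation $\delta B_3=dB_3-B_3d$ evaluates to an expression for $x*(y\ot w')$ in terms of $B_3(d_e x,y,w')$, $y\ot(x*w')$ and $(x*y)\ot w'$. Each term is built from $C_2$, the differential, and instances of $*$ whose first argument lies in $T^1$ and whose second argument has strictly smaller external degree — including $B_3(d_e x,y,w')$, which by the minimal brace identity collapses to $\sum (x'*y)\ot(x''*w')$ with $x',x''\in T^1$. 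Hence every term is controlled by the inductive hypothesis, the previous step, and the $T^1\times T^1$ hypothesis, giving the claim. A final left-distributivity reduction (the law $B_2\circ_1 C_2$ used in Theorem~\ref{teo_cobar}) lowers the first argument of an arbitrary $a*w$ to external degree one, so $f$ respects $*$ everywhere.

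With these three compatibilities in hand, the structural identity of the first paragraph gives, for any brace, $f(B_m(a_1,\dots,a_m))=f\big(\pm\bigotimes (x_i*a_j)\big)=\pm\bigotimes (f(x_i)*f(a_j))=B_m(f(a_1),\dots,f(a_m))$, using that $f$ respects $C_2$ and each $*$ appearing; thus $f$ is a morphism of Cacti algebras. I expect the main obstacle to be this middle step: pinning down the Koszul signs in the $B_3$-boundary identity and verifying that the induction on the external degree of the second argument really reduces every summand, which is exactly where the well graded hypothesis does its work.
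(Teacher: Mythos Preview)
Your proposal is correct and follows essentially the same route as the paper: reduce the higher braces to $B_2$ via the distributivity law $B_m\circ_1 C_2$ and well-gradedness (your first and last paragraphs are the paper's step~1), reduce the first argument of $B_2$ to $T^1$ by left distributivity (your ``final left-distributivity reduction'' is the paper's step~2), and handle $B_2(x,\mbf y)$ for $x\in T^1$ by induction on $|\mbf y|_e$ using the $\delta B_3$ identity (your third paragraph is the paper's step~3). Your explicit verification that $f$ commutes with $d$ on all of $T$ via the derivation property is a point the paper leaves implicit, and your diagnosis of the $\delta B_3$ step as the place where the signs and the well-graded hypothesis actually do the work is exactly right.
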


\begin{proof}
Let us denote by $\cup$  the associative product given by $C_2$ (in
$T$ and in $C$). In an analogous way to theorem~\ref{VvsTV}, the
signs are given by the Koszul rule, but in this proof it is not necessary to
make them explicit, so we will omit then for clarity.

The proof consist in the following reductions:

\begin{enumerate}
\item  If $f(B_2(\mbf x,\mbf y))=
B_2(f\mbf x,f\mbf y))$, then
$f(M,\mbf x_1,\dots, \mbf x_n)
=f(M,f\mbf x_1,\dots,f \mbf x_n)$
 for every cactus $M$.

\begin{proof}
Since $\Cacti$ is generated by $C_2$ and $B_m$ ($m\geq 2$), it
is enough to see that
$f$ commutes with this operations. Notice that $f$ is a morphism
of associative algebras by assumption.
In order to reduce from $B_m$ to $B_2$, we proceed by induction
in the external degree. Recall the identity
\[
\raisebox{-5pt}{\includegraphics[scale=0.7]{Bm.pdf}} \circ _1
 \raisebox{-3pt}{\includegraphics[scale=0.7]{ca12.pdf}} = \sum_k
  \raisebox{-5pt}{\includegraphics[scale=0.7]{Bm_1_C2_k.pdf}}
\]
If we want to compute
$B_m(\mbf x, \mbf y_1, \dots, \mbf y_{m-1})$, with
 $\mbf x \in T ^{p,\bullet }$, the well grading implies that
  the non-trivial terms are only with $p \ge m$. Considering elements
  $\mbf x =  x_1 \cup \mbf x'$ with $x\in T^1$ and
$\mbf  x'\i T^{m-1}$ (this is possible because we assume $T$
is generated by $T^1$) we have
\[
B_m\big( (x_1  \cup \mbf x'), \mbf y_1, \dots, \mbf y_{m-1} \big)
=\]
\[
\sum_{k=1}^m \pm B_k(x_1, , \mbf y^1, \dots, \mbf y^{k-1})
\cup   B_{m-k+1}(\mbf x', \mbf y^k, \dots, \mbf y^{m-1}) 
\]
and because of the well-grading ($|x_1|_e =1$, so every term is zero
 except two of them)
\begin{eqnarray*}
B_m\big(C_2 (x_1, \mbf x'), \mbf y_1, \dots, \mbf y_m \big)
& = &
\pm C_2 (x_1, B_m (\mbf x', \mbf y^1, \dots, \mbf y^{m-1}) )  \\
& & \pm C_2 (B_2(x_1,y_1) , B_{m-1} (\mbf x', \mbf y^2, \dots, \mbf y^{m-1}) ) \\
\end{eqnarray*}
where the first term has $|x'|_e < |x|_e$, and the second
is written using  $B_2$ y $B_{m-1}$. Hence,  $f$ commutes
with all $B_m$ if it does with $B_2$.
\end{proof}

\item  If $fB_2(x,\mbf y)=
B_2(fx,f\mbf y)$ for all $x\in T^1$, $\mbf y\in T$, then
 $fB_2(\mbf x,\mbf y)=
B_2(f\mbf x,f\mbf y)$ for all $\mbf x,\mbf y\in T$,
\begin{proof}
If $\mbf x = x_1 \cup \dots \cup x_r$, since $B_2$ distribute the
$\cup$-product in the first variable, we have
\[
B_2(\mbf x , \mbf y ) =
 \sum_{k=1}^r \pm \ x_1 \cup \dots B_2(x_k, \mbf y) \dots \cup x_r
\]
so the claim follows.
\end{proof}

\item
If $fB_2(x,y)=
B_2(fx,fy)$ for all $x,y\in T^1$ (which is true by assumption),
then
 $fB_2(x,\mbf y)=
B_2(f x,f\mbf y)$ for all $x\in T^1$, $\mbf y\in T$,
\begin{proof}
Let
 $\mbf y=\mbf y'\cup \mbf y' \in T$, notice that the
 external degree of $\mbf y'$ and $\mbf y''$ are both strict less than
 the degree of $\mbf y$.
For $x\in T^1$, we compute
\[
B_2(x,\mbf y)=
B_2(x,\mbf y'\cup \mbf y'')=
B_2\circ_2C_2(x,y',y'')
\]
\[
=
\pm B_2(x,\mbf y')\cup \mbf y''
\pm \mbf y' B_2(x,\mbf y'')+(\delta B_3)(x,\mbf y',\mbf y'')
\]
Notice that (due to the well grading and the fact that $x\in T^1$):
\begin{eqnarray*}
(\delta B_3)(x,\mbf y',\mbf y'') & = &
d(B_3(x,\mbf y',\mbf y'')
+ B_3(dx,\mbf y',\mbf y'')
\pm B_3(x,d\mbf y',\mbf y'')
\pm B_3(x,\mbf y',d\mbf y'') \\
& = & 
B_3(dx,\mbf y',\mbf y'') \\
\end{eqnarray*}
Now, since $dx\in T^1\oplus T^2$ and $T$ is generated by $T^1$ as
associative algebra, we can write
\[
dx=d_ix+\sum x_1\cup x_2\]
and so
\begin{eqnarray*}
B_3(dx,\mbf y',\mbf y'')
& = & B_3(d_ix,\mbf y',\mbf y'')+ B_3(x_1\cup x_2,\mbf y',\mbf y'') \\
& = & B_3(x_1\cup x_2,\mbf y',\mbf y'') \\
& = & (B_3\circ_1 C_2)(x_1,x_2,\mbf y',\mbf y'') \\
& = & \pm B_3(x_1,\mbf y',\mbf y'')\cup x_2 \\
& & \pm B_2(x_1,\mbf y')\cup B_2(x_2,\mbf y'') \\
& & \pm x_1\cup B_3(x_2,\mbf y',\mbf y'') \\
& =& \pm B_2(x_1,\mbf y')\cup B_2(x_2,\mbf y'') \\
\end{eqnarray*}
(we have used again that well grading hypothesis and the fact that
$d_ix$,  $x_1$ and $x_2$ belong to $T^1$).

We conclude
\[
B_2(x,\mbf y)
 =
\pm B_2( x,\mbf y) \cup \mbf y''
\pm \mbf y' \cup B_2(x,\mbf y'')
\pm B_2(x_1,\mbf y')\cup B_2(x_2,\mbf y'')
\]

With this in mind, we compute
\[
f(B_2(x,\mbf y))
=
f\left(
\pm B_2(x,\mbf y')\cup \mbf y''
\pm \mbf y' B_2(x,\mbf y'')
\pm B_2(x_1,\mbf y')\cup  B_2(x_2,\mbf y'')
\right)
\]
and since $f$ commutes with $\cup$
\[=
\pm f B_2(x,\mbf y')\cup f\mbf y''
\pm f\mbf y' \cup fB_2(x,\mbf y'')
\pm fB_2(x_1,\mbf y')\cup fB_2(x_2,\mbf y'')
\]
Because $\mbf y'$ and $\mbf y''$ have strict less external degree than
 $\mbf y$, we may assume  inductively that $f$ preserves the operation
  $B_2(x,-)$ in those degrees, and so the above formula is equal to
 \[ =
\pm  B_2(fx,f\mbf y')\cup f\mbf y''
\pm f\mbf y' \cup B_2(fx,f\mbf y'')
\pm B_2(fx_1,f\mbf y')\cup B_2(fx_2,f\mbf y'')
\]
and since $f$ preserves degrees, and also $C$ is well graded,
the arguments used to eliminate the terms of type
 $B_3(x,-)$ can also be used for $B_3(fx,-)$, and we conclude
 \[=
\pm  B_2(fx,f\mbf y')\cup f\mbf y''
\pm f\mbf y' \cup B_2(fx,f\mbf y'')
\pm B_2(fx_1\cup fx_2,f\mbf y'\cup f\mbf y'')
\]
\[=
\pm  B_2(fx,f\mbf y')\cup f\mbf y''
\pm f\mbf y' \cup B_2(fx,f\mbf y'')
\pm B_2(f dx,f(\mbf y'\cup \mbf y''))
\]
Finally because $f$ commutes with the differential in $T^1$,
we have that  $fdx=d(fx)$ and so
\[=
B_2(fx,f\mbf y' \cup f\mbf y'')=B_2(fx,f\mbf y)
\]
\end{proof}
\end{enumerate}
Since the requirement of the last reduction holds by assumption
of the lemma, we have finished the proof.
 \end{proof}

As immediate corollary, we see that Theorem~\ref{VvsTV} actually
gives an equivalence of categories between d.g. bialgebras and Cacti
algebras that are well graded and freely generated in external degree one:

\begin{coro} \label{coro_morfismos}
Let $H$ and $H'$ be two (d.g.) unitaries and counitaries bialgebras,
and endow $\Omega H = \ra T \ \ra H$ and
$\Omega H' = \ra T \ \ra H'$ with its natural Cacti algebra structure, then
\[
\Hom_{\Cacti}( \Omega H, \Omega H') \xto
\cong \Hom_{\text{d.g.bialg}} (H, H' ) 
\]
\end{coro}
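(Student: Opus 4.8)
The plan is to deduce Corollary~\ref{coro_morfismos} as a direct consequence of Theorem~\ref{VvsTV} (which already establishes the object-level bijection between d.g.\ bialgebras $H$ and well graded, freely-generated-in-external-degree-one Cacti algebras $\Om(H)$) together with Lemma~\ref{lemamorfismo} (which reduces a Cacti-algebra morphism to the data of an associative-algebra map compatible with the differential and the $*$-product on external degree one). Concretely, I would construct two mutually inverse maps between the two Hom-sets and check they are well defined.

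First I would treat the direction from bialgebra maps to Cacti maps. Given $\phi:H\to H'$ a morphism of unitary counitary d.g.\ bialgebras, I restrict to $V=\ra H=\Ker\epsilon$ (which $\phi$ preserves, since $\phi$ is counital) to get a graded map $V\to V'$, and extend it multiplicatively to the free associative algebras to obtain $\Om\phi:\ra T V\to\ra T V'$. This $\Om\phi$ is by construction a morphism of associative algebras. I must then verify the four hypotheses of Lemma~\ref{lemamorfismo}: homogeneity for the bigrading is clear since $\phi$ respects internal degree and the extension respects tensorial degree; compatibility with the differential on $T^1=V$ follows because $\phi$ commutes with both $d_H$ (as a d.g.\ map) and the comultiplication $\Delta$ (as a coalgebra map), and under the dictionary of Theorem~\ref{VvsTV} the differential on $V$ is exactly $d_i+d_e$ with $d_e$ encoding $\Delta'$; and the condition that $\Om\phi|_{T^1}:(V,*)\to(V',*)$ be an algebra map follows because, again by Theorem~\ref{VvsTV}, $*$ on $V$ is the restriction of the bialgebra product on $H$, which $\phi$ preserves. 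Lemma~\ref{lemamorfismo} then upgrades $\Om\phi$ to a Cacti-algebra morphism.

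Next I would treat the reverse direction. Given a Cacti-algebra morphism $\Psi:\Om H\to\Om H'$, I restrict to external degree one to obtain a map $\Psi|_{V}:V\to V'$ (this lands in $V'$ because $\Psi$ is bigraded and preserves external degree one). Since $\Psi$ is in particular a morphism of associative algebras for $C_2=\ud$ and for $*=B_2$, and commutes with the differential, the restriction $\Psi|_V$ is compatible with $d_i$, with $d_e$ (equivalently $\Delta'$), and with the product $*$; extending by the unit to $H=V\oplus k1_H$ and $H'=V'\oplus k1_{H'}$ and invoking the remarks preceding Theorem~\ref{VvsTV} yields a morphism $H\to H'$ of d.g.\ bialgebras. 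The two assignments are mutually inverse essentially by construction: $\Om$ followed by restriction to $V$ recovers $\phi|_V$, and restriction to $T^1$ followed by $\Om$ recovers $\Psi$ because $T$ is generated in external degree one so a Cacti (in particular associative-algebra) morphism is determined by its values on $T^1$.

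The main obstacle, and the only place needing genuine care, is checking that the bialgebra structure is fully respected under the correspondence rather than just one of its operations: I must confirm that a map respecting $C_2$, $B_2$, and $d$ on $T^1$ simultaneously encodes compatibility with the \emph{product}, the \emph{coproduct}, and the \emph{differential} of the bialgebra, using that Theorem~\ref{VvsTV} identifies precisely these three pieces of structure with $B_2$, $d_e$, and $d_i$ respectively. Everything else is bookkeeping with the unit element $1_H$ and the sign conventions inherited from the Koszul rule, which I would suppress as in the proof of the Lemma.
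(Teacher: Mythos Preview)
Your approach is correct and is exactly the one the paper takes: the paper's proof consists of the single remark that both $\Omega H$ and $\Omega H'$ are well graded and generated in external degree one, so Lemma~\ref{lemamorfismo} applies. You have simply spelled out in full the two mutually inverse maps and the verifications that the paper leaves implicit, invoking Theorem~\ref{VvsTV} to translate the conditions on $d$ and $B_2$ in external degree one back into the d.g.\ bialgebra axioms.
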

\begin{proof}
We only remark that both $\Omega H$ and $\Omega H'$ are well
graded and generated in external degree one, so the Lemma above applies.
\end{proof}

\begin{rmk} The Cacti algebra structure on $\Om H$ is unique if
one requires well grading, and that the operation $B_2$ restricted to
$H$ agree with
the product of $H$. This is true because if
$\wt { \Om H }$ is equal to $\Om(H)$ as d.g. algebras, but with eventually different
Cacti algebra structure with this properties, then
the identity map $\Om H  \to \wt {  \Om H}$ verifies the hypothesis of the
above lemma, and hence it must be a Cacti-algebra morphism.
\end{rmk}

Next theorem is a continuation of the dictionary between Cacti
 axioms and bialgebra axioms, Before present it, we recall a standard
definition of a modulo-algebra.

\begin{defi} \label{def_H_modulo}
Let $A$ be a unital associative algebra and $H$ a unitary and counitary
bialgebra. We say that
$\rho:H\ot A\to A$ is an {\bf
$H$-module algebra structure} on $A$
if it makes $A$ into an $H$-modulo but also satisfying the property that
the multiplication map
\[
m_A:A\ot A\to A
\]
is $H$-linear (with the diagonal action on $A\ot A$).

In case $A$ is a d.g. algebra and $H$ a d.g. bialgebra,
the $H$-module algebra structure is called differential if
\[
d(h(a))=d_H(h)(a) +(-1)^{|h|}h(d_A(a))
\]
or equivalently if the map
\[
\rho:H\ot A\to A
\]
is a morphism of complexes.
\end{defi}

\begin{teo}\label{teomorfismo}
Let $A$ be a d.g. unital associative algebra
and $H$ a d.g. unital and counital bialgebra. Then there exists
a 1-1- correspondence between Cacti algebra
morphism $\Om(H)\to  C^\bullet(A)$ and
differential $H$-module algebra structures on $A$.
The correspondence is given by restriction:
\[\Hom_{\Cacti}(\Om H ,C^\bullet (A))\to \Hom_{d.g.alg}(\ra H,\End(A))\cong  \Hom_{d.g.alg_1}( H,\End(A))\cong
\Hom(H\ot A,A)
\]
and in the other direction, if $\rho:H\to \End(A)$, $x\mapsto
\rho_x$, the map
$\Om(H)\to C^\bullet(A)$ is
given by
\[
TV\ni x_1\ot\cdots\ot x_n\mapsto
\big(a_1\ot\cdots\ot a_n\mapsto
\rho_{x_1}(a_1)
\cdots \rho_{x_n}(a_n)\big)
\]
  \end{teo}
In this theorem,  $d.g.alg$ means non necesarily unital
 differential graded associative algebras, and
 $d.g.alg_1$ are the $d.g.alg$ maps that also preserve the unit.

 \begin{proof}
Since   $\Omega H$ and  $C(A)$ are
 both well graded Cacti algebras, we can use
 Lemma \ref{lemamorfismo}.
Then, a morphism $f:\Omega H \to C(A)$ is the same as a
d.g. algebra morphism such that its restriction on
elements of external degree one (i.e. to elements of $H$)
is multiplicative with respect to the operation $*$. This produces
a morphism
\[
\rho:=f|_V:V\to \End(A)
\]
where $V=\ra H=\Ker(\epsilon)$.
This shows that morphisms whose restriction are $*$-multiplicative
are the same as (non unital)  $V$-modulo structures
 on $A$, that is the same as unital $H$-module structures on $A$.

Notice that given an $H$-module structure $\rho:H\to \End(A)$,
the restriction to $V$ produces a multiplicative map $V\to\End(A)$.
Then, the universal property of the tensor algebra gives a 
multiplicative map $\wh\rho:(TV,\ot )\to (C(A),\cup)$.
The theorem follows if we show that
``$\wh\rho$ commutes with the differential if and only if
the $H$-module structure is a (differential) $H$-module
{\em algebra}
structure''.

Let us denote, for $h\in H$ y $a\in A$,
\[
h(a):=(\rho(h))(a)
\]
When computing the Hochschild boundary of $\rho(h)$
we get
\[
(d_e\rho(h))(a\ot b)
=-a h(b)+h(ab)-h(a)b
\]
On the other hand, the internal differential is
\[
(d_i\rho(h))(a)=d(h(a))-(-1) ^{|h|}h(d(a))
\]
Because $d=d_e+d_i$ and their bidegrees are different, the equality
\[
d\rho(h)
=\wh\rho dh
\]
is equivalent to two equations
\[
d_e\rho(h)
=\wh\rho d_eh,\
d_i\rho(h)=
\rho d_ih
\]
The equation with $d_e$ tell us that $A$ is an $H$-module algebra,
because
\[
(\wh\rho d_ih)(a\ot b)
=(\wh\rho (\Delta h-1_H\ot h-h\ot 1_H))(a\ot b)
\]\[
=(\wh\rho (h_1\ot h_2-1_H\ot h-h\ot 1_H))(a\ot b)
\]\[=h_1(a)h_2(b)-h(a)b-ah(b)
\]
and hence
\[
\partial\rho(h)=\wh\rho d_i h
\iff h(ab)=h_1(a)h_2(b)\]

And the equation with $d_i$ says
\[
(d_i\rho(h))(a)=d_A(h(a))-(-1) ^{|h|}h(d_A(a))=d_H(h)(a)
\]
namely, the d.g. condition for $\rho$.
\end{proof}

An immediate consequence is the following
\begin{coro}
Let $H$ be a bialgebra and $A$ an $H$-module algebra
with structure map
$H\ot A\xto \rho A$. Then  $\rho$ induces a Gerstenhaber algebra map
\[
H^\bullet(\Omega H,d)\to HH^\bullet(A)
\]
\end{coro}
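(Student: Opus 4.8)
The plan is to read the statement off Theorem~\ref{teomorfismo} together with the general principle that passing to cohomology turns a Cacti algebra into a Gerstenhaber algebra functorially. First I would invoke Theorem~\ref{teomorfismo}: the $H$-module algebra structure $\rho$ corresponds to a morphism of Cacti algebras
\[
f:\Om(H)\to C^\bullet(A).
\]
Since $f$ is in particular a chain map (it commutes with the total differential $d=d_i+d_e$), it induces a linear map on cohomology
\[
H^\bullet(f):H^\bullet(\Om(H),d)\to H^\bullet(C^\bullet(A),d)=HH^\bullet(A).
\]

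Next I would recall that the cohomology of any Cacti algebra carries a canonical Gerstenhaber algebra structure. Concretely, the associative product $C_2=\ud$ descends to a product on cohomology which is graded commutative: this is exactly the content of the relation $\du-\ud=\delta\,\udu$ used in the proof of Theorem~\ref{VvsTV}, which says that the commutator of $C_2$ is the boundary of $B_2$, so $C_2$ becomes commutative on cohomology via the explicit homotopy $B_2$. The degree $-1$ bracket is obtained from the pre-Lie product $*=B_2$ by antisymmetrization; the brace relations and the formula for $\delta B_2$ guarantee that this descends to a graded Lie bracket satisfying the Poisson/Leibniz identity on cohomology. For $C^\bullet(A)$ this recovers Gerstenhaber's classical structure on $HH^\bullet(A)$, and for $\Om(H)$ it is precisely the Gerstenhaber structure on $H^\bullet(\Om H,d)$ appearing in the statement (cf.~Example~\ref{ex_Lambdag}).

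Finally I would observe that a morphism of Cacti algebras commutes with every operation of the operad $\X_2$, in particular with $C_2$ and with $B_2$; hence $H^\bullet(f)$ respects both the induced product and the induced bracket, i.e.\ it is a morphism of Gerstenhaber algebras.

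The only genuinely non-formal input is the assertion that the cohomology of a Cacti ($\X_2$-)algebra is a Gerstenhaber algebra and that this assignment is functorial; this is part of the standard theory of brace (Gerstenhaber--Voronov) algebras and can be cited from the references. Once it is available the corollary is immediate, since $f$ preserving the operations at the chain level forces $H^\bullet(f)$ to preserve the two Gerstenhaber operations. The main (and essentially only) obstacle is therefore bookkeeping with the sign conventions defining $*$ and the bracket on the two sides, so that $H^\bullet(f)$ comes out strictly multiplicative and bracket-preserving; this works because $f$ commutes with $B_2$ on the nose, and hence with its antisymmetrization, before any passage to cohomology.
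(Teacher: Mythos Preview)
Your proposal is correct and matches the paper's approach: the paper states the corollary as ``an immediate consequence'' of Theorem~\ref{teomorfismo} and gives no further proof, relying implicitly on exactly the two ingredients you spell out, namely the Cacti morphism $\Om(H)\to C^\bullet(A)$ produced by the theorem and the standard fact that passing to cohomology sends Cacti algebras to Gerstenhaber algebras functorially. Your write-up simply makes explicit what the paper leaves to the reader.
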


\subsection*{Examples}

Let $\mfk g$ be a Lie algebra and $H=U(\mfk g)$. If $A$ is an associative
algebra, then an $H$-module algebra map is the same as an action of
$\mfk \g$ by derivations. If one takes
$\mfk g=\Der(A)$, then the morphism
$\Omega H \to C(A)$ induces a map on homology
\[
\Lambda ^\bullet\Der(A)\to HH ^\bullet(A)
\]
whose image is the associative subalgebra of $HH^\bullet(A)$
generated by derivations.
This example shows that the map from Theorem \ref{teomorfismo}
is in general non trivial.

But it can happen that a bialgebra $H$ has no primitive elements
but non-trivial cohomology. This will produce maps with no derivations
in its image, but giving elements of higher (cohomological) degree.

We present a minimal example of this situation.

Let $H=k1\oplus kx\oplus kg\oplus kgx$ be the Sweedler or Taft algebra
of dimension 4, that may be describe in terms
of generators and relations as the $k$-algebra generated by
$x$ and $g$ with relations
\[
 x^2=0,\  g^2=1, \ xg=-gx
\]
(we assume characteristic different from 2). This algebra is a bialgebra
with comultiplication determined by
\[
\Delta(g)=g\ot g \quad \Delta x=x\ot 1+g\ot x
\]
This algebra has no primitive elements, so $H ^1(\Omega H)=0$,
but a direct computation shows that the (class of the) element
 $xg\ot x$
generates $H^2(\Omega H)$ over $k$.
A less direct computation shows that
$H ^\bullet(\Omega H)$ is a polynomial ring on one variable, with
 generator in degree two (given by this element).
Next, we include the verification of this fact, that follows from
these
three items:
\begin{itemize}
 \item $H\cong H^*$ as Hopf algebras, for instance, take the elements in
  $H^*$ defined by
  \[
  \wh g:\left\{
  \begin{array}{ccr}
   1&\mapsto& 1\\
   g&\mapsto& -1\\
   x&\mapsto& 0\\
   xg&\mapsto& 0  
  \end{array} 
  \right.
\hskip 2cm  \wh x:\left\{
  \begin{array}{ccc}
   1&\mapsto& 0\\
   g&\mapsto& 0\\
   x&\mapsto& 1\\
   xg&\mapsto& 1  
  \end{array} 
  \right.
\]
one can easily verify that
 $\wh g ^2=\epsilon$, $\wh g\wh x=-\wh x\wh g$, $\wh x^2=0$.
For that reason, we have an isomorphism
 \[
  H^\bullet(\Omega H) =\Ext^\bullet_{H^*}(k,k)
  \cong \Ext^\bullet_H(k,k)
    \]
 
 \item Also, $H=(k[x]/x^2)\# k \ZZ_2$, so one can compute
  $\Ext$ with the formula
  \[
  \Ext_H^\bullet(k,k)=\Ext^\bullet_{k[x]/x^2}(k,k)^{\ZZ_2}
  \]
 (see for instance \cite{Ste95})
  \item
$\Ext^\bullet_{k[x]/x^2}(k,k)$ is a polynomial ring in one variable of
degree one (this is the easiest example of classical quadratic
 Koszul algebra), call the generator $D$.
 There re two possibilities:
the action of the generator of $\ZZ_2$ is trivial in $D$, or it
acts by $D\mapsto -D$. In the first case it should be
  $\Ext_{k[x]/x^2}(k,k)^{\ZZ_2}=k[D]$, while in the second
  it should be  $\Ext_{k[x]/x^2}(k,k)^{\ZZ_2}=k[D^2]$.
But in $H$ there are no primitive elements, so
$H^1(\Omega H)=0$ and only the second possibility can be true.
\end{itemize}

A consequence of this commutation is that the  Gerstenhaber
bracket (in cohomology) of the generator with itself is trivial,
just by degree considerations. This implies that in any
$H$-module algebra $A$,  the bilinear map given by
\begin{eqnarray*}
\Psi:A^{\ot 2} & \to & A \\
a\ot b & \mapsto & xg(a)x(b) \\
\end{eqnarray*}
is an integrable  2-cocycle in the sense that $[\Psi,\Psi]=0$.

We finally recall that the data of an $H$-module algebra structure
on $A$ is the same as a $\ZZ_2$-grading
(given by the eigenvectors of eigenvalues $\pm 1$ of $g$,
we assume char$\neq $2) and a square zero
super-derivation (with respect to that grading), because the
general formula
\[
h(ab)=h_1(a)h_2(b)
\]
for  $h=x$ says  (if $a$ is homogeneous):
\[
x(ab)=x(a)b +g(a)x(b)=
x(a)b +(-1)^{|a|}ax(b)
\]
In that way, every square zero super derivation $x$ in $A$
gives an unobstructed formal deformation of $A$.

We finish by collecting some general information on Hopf
algebras and its cohomology

\begin{enumerate}
\item If $H$ is finite dimensional bialgebra, then
\[
H^\bullet(\Om(H))=\Ext^\bullet_{H^*}(k,k)
=H^\bullet(H^*,k)\subset HH^\bullet(H^*)\]
These equalities are immediate from the definition if one uses
the standard complex for solving $H^*$ as $H^*$-bimodule
when computing Hochschild cohomology. The last inclusion
was proved (to be a split inclusion)
in \cite{FS}, by  giving a specific map at the level of complexes, that
reserves the cup product and $i$-th compositions. Now this map
can be interpreted from the fact that any finite dimensional
bialgebra is an $H^*$-module algebra. The finite dimensional
hypothesis is only
needed for $H^*$ to be a bialgebra as well. We remark that the notation $*$
that we used 
for the product in a general bialgebra  is motivated by this example,
since the $B_2$ operation in $\Ext_H^\bullet(H,k)$ comes from the convolution product in $H^*$.

\item If $H$ is any bialgebra and $H^\prime$ is a bialgebra in
duality with $H$, namely there is a pairing $(-,-):H\ot H^\prime \to k$
satisfying
\[
(\Delta a, x\prime\ot y^\prime)=(a,x^\prime\ot y^\prime)
\]
and
\[( a\ot b, \Delta x)=(ab,x^\prime)
\]
then $H$ is an $H^\prime$-module algebra.
\item If $A$ is an $H$-comodule algebra, that is, it
is given a comodule structure map
\[
A\to A\ot H
\]
such that the multiplication $m_A:A\ot A\to A$ is $H$-colinear,
and $H^\prime$ is in duality with $H$, then $A$ is an
$H^\prime$-module algebra.
Geometrical examples come in this way:
if $X$ and $G$ are affine algebraic varieties and $G$ is an algebraic
group, to have an algebraic action  of $G$ on $X$ is the same
as a comodule algebra structure $\O_X\to \O_X\ot \O_G$.
If $\mfk\g$ is the Lie algebra associated to the algebraic
group $G$, then $U\g$ is in duality with $\O_G$, and hence
$A$ is an $U\g$-module algebra.

\item If $G$ is a discrete group and $H=k[G]$, an $H$-module
algebra structure on $A$ is the same as a $G$-grading,
 but this gives nothing interesting because $k[G]$ is cosemisimple.

\item The general Taft algebra $H=T_m$:
This algebra is generated by $x,g$
with relations
\[
g^ p = 1;\ x^m = 0;\  gx = \xi xg\]
where $\xi$ is a primitive $m$-th root of unity.
The comultiplication given by
$\Delta g=g\ot g$,
$\Delta x=x\ot g+1\ot x$,
In order to compute the cohomology, one can see
$H\cong (k[x]/x^m)\#k[G]$ with $G=\langle g: g^m=1\rangle$,
and so $H^*$ is also of the form
$H^*\cong A\#k^G$. The same result in \cite{Ste95}
gives a formula
\[
H^\bullet(A\# k^G,k)=
H^0(k^G,H^\bullet(A,k))=
H^\bullet(A,k)_0
\]
where $H^\bullet(A,k)_0$ is the homogeneous component of degree
zero with respect to the $G$-grading of
$H^\bullet(A,k)$.

\item Let $g$ be a group-like element in some bialgebra $H$,
 and
denote  $u_g=g-1_H$ (notice $g\notin\Ker\epsilon$, but
$u_g\in\Ker\epsilon$). Then $d(u_g)=\Delta' u_g=u_g\ot u_g$.
If $h$ is another gouplike element and $x$ is $g$-$h$-primitive, namely
$\Delta x=g\ot x+x\ot h$, then
$dx=u_g\ot x+x\ot u_h$.
\begin{proof}
\[
d(u_g)=d(g-1_H)- 1_H\ot u_g-u_g\ot 1_H
=g\ot g-1_H\ot 1_H - 1_H\ot u_g-u_g\ot 1_H
\]
\[
=g\ot g-1_H\ot 1_H - 1_H\ot g+1_H\ot 1_H -g\ot 1_H+1_H\ot 1_H
\]
\[
=(g-1_H)\ot (g-1_H)=u_g\ot u_g
\]
The formula
$dx=u_g\ot x+x\ot u_h$ is proved in a complete analogous way, we omit it.
\end{proof}
This computation allow us to generalize the
example of the $H$-module algebra
action of the Sweedler algebra in the following way:

Let $d_1,\dots, d_n:A\to A$ be skew-derivation of an associative
algebra $A$. That means there exist $g_i$ and $h_i$
automorphisms of algebras of $A$ such that
\[
d_i(ab)=
g_i(a)d_i(b)+
d_i(a)h_i(b)\ \forall a,b\in A
\]
Let $f:A^{\ot n}\to A$ be defined as
\[
f(a_1,\dots,a_n)=d_1(a_1)\cdots d_n(a_n)
\]
If, in addition,
$g_0=g_{n+1}=\id$ and $h_i=g_{i+1}$  for all $i=1,\dots, n-1$,
then $f$ is a Hochschild  $n$-cocycle, coming from
$\Om(H)$ for some bialgebra $H$.
\begin{proof}
Let us consider the free algebra generated by $x_i: i=1,\dots, n$
and $G_i:i=0,\dots, n+1$, with comultiplication
determined by
\[
\Delta G_i=G_i\ot G_i;\ \Delta
x_i=G_i\ot x_i+x_i\ot G_{i+1}
\]
and define the $H$-module structure on $A$ by
\[
x_i(a)=d_i(a),\ G_i(a)=g_i(a)\]
where, by notation $g_{n+1}=h_n$. Then $A$ is an $H$-module algebra.
We need to check that
$\om:=x_1\ot\cdots\ot x_n\in\Om(H)$ satisfies $d\om=0$.
But this is easy because
\[
d\om=
d(x_1\ot\cdots\ot x_n)=\sum_{i=1}^n
(-1)^{i+1}x_1\ot\cdots \ot d(x_i)\ot
\cdots \ot x_n
\]
\[
=\sum_{i=1}^n
(-1)^{i+1}(x_1\ot\cdots \ot u_{G_i}\ot x_i\ot
\cdots \ot x_n+
x_1\ot\cdots \ot x_i\ot u_{G_{i+1}}\ot
\cdots \ot x_n)
\]
and all terms cancel telescopically except the first and the last:
\[
=u_{G_0}\ot x_1\ot\cdots  \ot x_n
+(-1)^{n-1} x_1\ot\cdots  \ot x_n\ot  u_{G_{n+1}}
\]
but $u_{G_0}
=u_{G_{n+1}}
=u_{id}=0$, so $d\om=0$, and hence $\partial f=0$ in $C^\bullet (A)$.
\end{proof}
We remark that also the other Cacti operations that one may
do with $f$ in $C^\bullet(A)$ may also be done in $\Om(H)$.
\end{enumerate}
It would be interesting to know, given an associative algebra 
$A$, whether or not any class in $HH ^\bullet(A)$ comes from
an element in $H ^\bullet(\Om(H))$, for some bialgebra $H$ acting
on $A$, making $A$ into an $H$-module algebra.

\bibliographystyle{halpha}
\addcontentsline{toc}{chapter}{References}
{ \small \hyphenpenalty=10000

 \bibliography{goodgraded}
}

\end{document}